\RequirePackage{fix-cm}
\documentclass[smallextended]{svjour3}
\smartqed

\usepackage{amsmath}
\usepackage{amsfonts}
\usepackage{amssymb}
\usepackage[dvipspdf]{xcolor}

%% Use this to add footnotes 
\def\peter#1 {\fbox {\footnote {\ }}\ \footnotetext { From Peter: {\color{red}#1}}}

\def\gohar#1 {\fbox {\footnote {\ }}\ \footnotetext { From Gohar: {\color{blue}#1}}}

\def\qi#1 {\fbox {\footnote {\ }}\ \footnotetext { From Qi: {\color{orange}#1}}}

%% Use this to add new text or change the main text.

\newcommand{\gf}{{\mathbb F}}

%\journalname{Des. Codes Cryptogr.}

\begin{document}

\title{On the size of Kakeya sets in finite vector spaces}

\author{Gohar Kyureghyan \and Peter M\"uller   \and Qi Wang}

\institute{
G. Kyureghyan \at Institute of Algebra and Geometry, Faculty of Mathematics, Otto-von-Guericke University Magdeburg, Universit\"atsplatz 2, 39106, Magdeburg, Germany\\
\email{gohar.kyureghyan@ovgu.de}\\
P. M\"uller \at 
Institute for Mathematics,
University of W\"urzburg, Campus Hubland Nord,
97074 W\"urzburg, Germany\\ \email{peter.mueller@mathematik.uni-wuerzburg.de} \\
Q. Wang \at Institute of Algebra and Geometry, Faculty of Mathematics, Otto-von-Guericke University Magdeburg, Universit\"atsplatz 2, 39106, Magdeburg, Germany\\
\email{qi.wang@ovgu.de}
}

\date{Received: date / Accepted: date}

\maketitle

\begin{abstract}
For a finite field $\gf_q$, a Kakeya set $K$ is a subset of $\gf_q^n$ that contains a line in every direction. This paper derives new  upper bounds on the minimum size of Kakeya sets when $q$ is even.

\keywords{Kakeya set \and finite vector space \and Gold power function}

\subclass{11T30 \and 11T06} 
\end{abstract}

\section{Introduction}

Let $\gf_q$ be a finite field with $q$ elements. A {\em Kakeya set} $K \subset \gf_q^n$ is a set containing a line in every direction. 
More formally, $K \subset \gf_q^n$ is a Kakeya set if and only if for every ${\bf x} \in \gf_q^n$, there exists ${\bf y} \in \gf_q^n$ such that $\{ {\bf y} + t {\bf x} : t \in \gf_q \} \subset K$. Wolff in~\cite{wolff99} asked whether a lower bound of the form $|K| \geq C_n \cdot q^n$ holds for all Kakeya sets $K$, where $C_n$ is a constant depending
only on $n$. Dvir~\cite{Dvir08} first gave such a lower bound with $|K| \geq (1/n!) q^n$. Later Dvir, Kopparty, Saraf and Sudan improved the lower bound to $|K| \geq (1/2^n) q^n$ in~\cite{DKSS09} (see also~\cite{SS08}). It was shown in~\cite{DKSS09} that for any $n \geq 1$ there exists a Kakeya set $K \subset \gf_q^n$ with
\begin{equation}\label{eqn-upperbound}
  |K| \leq 2^{-(n-1)} q^n + O(q^{n-1}).
\end{equation}
For more information on Kakeya sets, we refer to a recent survey~\cite{Dvir12}.
 
When $q$ is bounded and $n$ grows, bound (\ref{eqn-upperbound}) is weak, and some recent papers improved the $O$-term in it to give better upper bounds for this case. 
The best currently known bound was obtained by Kopparty, Lev, Saraf and Sudan in~\cite{KLSS11}, following the ideas from~\cite{SS08,DKSS09} (see also~\cite{MT04}):  

\begin{theorem}\cite[Theorem 6]{KLSS11}\label{thm-levbound}
Let $n \geq 1$ be an integer and $q$ a prime power. There exists a Kakeya set $K \subset \gf_q^n$ with
$$
|K| < 
\left\{ \begin{array}{ll}
  2\left( 1+ \frac{1}{q-1} \right) \left(\frac{q+1}{2}\right)^n & \textrm{if $q$ is odd,} \\
  \frac{3}{2} \left( 1 + \frac{1}{q-1} \right) \left(\frac{2q+1}{3}\right)^n & \textrm{if $q$ is an even power of $2$,} \\
  \frac{3}{2} \left( \frac{2(q+\sqrt{q}+1)}{3} \right)^n & \textrm{if $q$ is an odd power of $2$}.
\end{array}\right.
$$
\end{theorem}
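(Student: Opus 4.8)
The plan is to reduce the problem in $\gf_q^n$, simultaneously for all $n$, to a one‑variable ``gadget'' over $\gf_q$, and then to exhibit a good gadget in each of the three cases. Call a map $\phi\colon\gf_q\to\gf_q$ an \emph{$s$-gadget} if $|S_t(\phi)|\le s$ for every $t\in\gf_q$, where $S_t(\phi)=\{\phi(m)+tm:m\in\gf_q\}$. I claim that from an $s$-gadget one obtains, for every $n\ge1$, a Kakeya set in $\gf_q^n$ of size at most $q\,\frac{s^{\,n}-1}{s-1}$, hence less than $\frac{q}{s-1}\,s^{\,n}$. Granting this, the theorem follows once one produces an $s$-gadget with $s=\frac{q+1}{2}$ for odd $q$, with $s=\frac{2q+1}{3}$ when $q$ is an even power of $2$, and with $s<\frac{2(q+\sqrt q+1)}{3}$ when $q$ is an odd power of $2$: one checks $\frac{q}{s-1}=2(1+\frac1{q-1})$ in the first case, $\frac{q}{s-1}=\frac32(1+\frac1{q-1})$ in the second, and $\frac{q}{s-1}\cdot s^{\,n}<\frac32\bigl(\frac{2(q+\sqrt q+1)}{3}\bigr)^n$ in the third (for which $\frac{q}{s-1}<\frac32$ together with $s<\frac{2(q+\sqrt q+1)}{3}$ suffices, up to finitely many small $q$).

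The reduction is an induction on $n$, with $K=\gf_q$ for $n=1$. Given a Kakeya set $K'\subseteq\gf_q^{\,n-1}$ of size at most $q\,\frac{s^{\,n-1}-1}{s-1}$, set $K=\bigl(\{0\}\times K'\bigr)\cup\bigcup_{\mathbf m\in\gf_q^{\,n-1}}\ell_{\mathbf m}$, where for $\mathbf m=(m_2,\dots,m_n)$ the line $\ell_{\mathbf m}=\{(t,\ \phi(m_2)+tm_2,\ \dots,\ \phi(m_n)+tm_n):t\in\gf_q\}$ has direction $(1,\mathbf m)$. A nonzero direction with nonzero first entry may be rescaled to have first entry $1$ and is then covered by some $\ell_{\mathbf m}$; a direction with first entry $0$ is covered inside $\{0\}\times K'$ by induction; so $K$ is a Kakeya set. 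For the size, $\bigcup_{\mathbf m}\ell_{\mathbf m}\subseteq\bigcup_{t\in\gf_q}\{t\}\times S_t(\phi)^{\,n-1}$, which has at most $\sum_{t\in\gf_q}|S_t(\phi)|^{\,n-1}\le q\,s^{\,n-1}$ elements, so $|K|\le q\,s^{\,n-1}+|K'|\le q\,s^{\,n-1}+q\,\frac{s^{\,n-1}-1}{s-1}=q\,\frac{s^{\,n}-1}{s-1}$, closing the induction.

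It remains to build the gadgets. For odd $q$, take $\phi(m)=m^2$: since $m^2+tm=(m+t/2)^2-(t/2)^2$, the set $S_t$ is a translate of the set of squares, so $|S_t|=\frac{q+1}{2}$ for every $t$, including $t=0$. When $q$ is an even power of $2$ we have $3\mid q-1$; take $\phi(m)=m^3$. Then $S_0$ is the set of cubes, of size $\frac{q-1}{3}+1<\frac{2q+1}{3}$. For $t\ne0$ the fibres of $m\mapsto m^3+tm$ are the $\gf_q$-root sets of $X^3+tX+v$; the only non-separable one is at $v=0$ (a double root at $\sqrt t$), and for $v\ne0$ a fibre has $1$ or $3$ elements. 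Writing the collision condition as $m'^2+mm'+m^2+t=0$, noting that it is solvable in $m'$ precisely when $\tr_{\gf_q/\gf_2}(t/m^2)=0$, and using $\tr_{\gf_q/\gf_2}(1)=0$ since $\log_2 q$ is even, a count of these traces shows that exactly $\frac{q-4}{6}$ fibres have three elements; hence $|S_t|=q-1-2\cdot\frac{q-4}{6}=\frac{2q+1}{3}$, so $\max_t|S_t|=\frac{2q+1}{3}$ as required. The case where $q$ is an odd power of $2$ is the hard one: now $3\nmid q-1$, and for every cubic $p$ there is a $t$ making $m\mapsto p(m)+tm$ a permutation of $\gf_q$ (absorb the quadratic term by a translation, then the added linear term can turn it into $m\mapsto m^3+\mathrm{const}$, which is bijective since $\gcd(3,q-1)=1$), forcing $|S_t|=q$ for that $t$. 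One therefore uses a gadget $\phi$ that is not a polynomial of degree $\le3$, chosen so that $m\mapsto\phi(m)+tm$ is never bijective and so that, for every $t$, the number of its $3$-element fibres is as large as the Hasse--Weil lower bound $\frac16(q+1-2\sqrt q)$ for a genus-one curve permits; this yields $|S_t|\le\frac13\bigl(3q-3-(q+1-2\sqrt q)\bigr)=\frac{2q+2\sqrt q-4}{3}<\frac{2(q+\sqrt q+1)}{3}$ for all $t$.

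I expect the genuine difficulty to be exactly this last step: producing a single gadget on $\gf_q$ whose characteristic-two cubic root count is controlled \emph{uniformly in the height $t$} — including the exceptional, repeated-root fibres — and, in the odd-power case, extracting the required count from the Hasse--Weil estimate, which is precisely what creates the $\sqrt q$ loss; the even-power case already hinges on getting the trace count exactly right. By comparison, the recursive packing and the accounting of the geometric series (pinning down the constants $2(1+\frac1{q-1})$, $\frac32(1+\frac1{q-1})$, $\frac32$ and absorbing the term $|K'|$) should be routine.
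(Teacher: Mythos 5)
Your reduction and your first two gadgets follow essentially the same route as the paper (and as \cite{KLSS11}): the recursive set you build is, up to a permutation of coordinates, the paper's set $K=\{(x_1,\dots,x_j,t,0,\dots,0)\}$, your bound $q\,\frac{s^n-1}{s-1}$ is formula (\ref{eqn-kssize}) estimated from above, and the gadgets $x^2$ (odd $q$, $|S_t|=(q+1)/2$) and $x^3$ ($q$ an even power of $2$, $|S_t|\le(2q+1)/3$) are exactly the ones used there; your fibre count for $x^3+tx$ agrees with Proposition~\ref{pro-issize}. The constants $2\bigl(1+\frac1{q-1}\bigr)$ and $\frac32\bigl(1+\frac1{q-1}\bigr)$ check out.

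The genuine gap is the case where $q$ is an odd power of $2$. There you never exhibit a gadget: you correctly observe that no cubic polynomial can work (for some $t$ the map $m\mapsto p(m)+tm$ becomes a bijection), and you list the properties a gadget \emph{ought} to have (never bijective, with at least $\frac16(q+1-2\sqrt q)$ three-element fibres for every $t$), but asserting that a function ``chosen so that'' these hold exists is precisely the content that must be proved. The source \cite{KLSS11} takes the explicit function $f(x)=x^{q-2}+x^2$ and proves $|I_f(t)|\le\frac{2(q+\sqrt q+1)}{3}$ by identifying the collisions of $f(x)+tx$ with points on explicit genus-one curves and applying the Weil bound uniformly in $t$ (including the exceptional fibres coming from $x=0$, where $x^{q-2}$ degenerates). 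Without naming a function and carrying out that analysis, the third case of Theorem~\ref{thm-levbound} is not established. A secondary, smaller issue: your passage from $s<\frac{2(q+\sqrt q+1)}{3}$ to the stated constant $\frac32$ requires $\frac{q}{s-1}<\frac32$, which with your heuristic value $s\le\frac{2q+2\sqrt q-4}{3}$ needs $2\sqrt q>7$ and hence fails for $q=8$ (and $q=8$ is not dismissible by taking $K=\gf_q^n$). The clean fix, implicit in (\ref{eqn-kssize}), is to bound each summand by the increasing function $x\mapsto\frac{x^n-1}{x-1}$ evaluated at $s_0=\frac{2(q+\sqrt q+1)}{3}$ itself, since $\frac{q}{s_0-1}=\frac{3q}{2q+2\sqrt q-1}<\frac32$ for every $q$.
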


Theorem~\ref{thm-levbound} was proved by constructing a Kakeya set $K \subset \gf_q^n$ from a suitable function $f: \gf_q \rightarrow \gf_q$ as follows: For a given $t \in \gf_q$, set $$I_f(t) := \{ f(x) + tx \,|\, x \in \gf_q\}.$$ 
Further, define 
$$
K := \{(x_1, \ldots, x_j, t, 0, \ldots, 0)\,|\, 0 \leq j \leq n-1, t \in \gf_q, x_1, \ldots, x_j \in I_f(t) \}.
$$
If $f$ is a non-linear function, then $K$ is  a Kakeya set~\cite{KLSS11} of size 
\begin{equation}\label{eqn-kssize}
  |K| = \sum_{j=0}^{n-1} \sum_{t \in \gf_q} |I_f(t)|^j = \sum_{t \in \gf_q} \frac{|I_f(t)|^n-1}{|I_f(t)| - 1} .
\end{equation}
Clearly, to construct a small Kakeya set, we need to find a function $f: \gf_q \rightarrow \gf_q$ for which the sets $I_f(t)$ are small. Theorem~\ref{thm-levbound} was obtained by taking
\begin{itemize}
  \item[-] $f(x) = x^2$ for $q$ odd, since then $|I_f(t)| \leq (q+1)/2$ holds for all $ t \in \gf_q$;
  \item[-] $f(x) = x^3$ for $q$ an even power of $2$, since then $|I_f(t)| \leq (2q + 1)/3$ holds for all $t \in \gf_q$;
  \item[-] $f(x) = x^{q-2} + x^2$ for $q$ an odd power of $2$, since then $|I_f(t)| \leq 2(q + \sqrt{q} + 1)/3$ holds for all $t \in \gf_q$.
\end{itemize}

In~\cite{KLSS11}, it was also mentioned that it might be possible to choose better non-linear functions $f: \gf_q \rightarrow \gf_q$ to improve the bounds in Theorem~\ref{thm-levbound}. 

In this paper, we investigate this idea further and derive indeed  better upper bounds on the size of Kakeya sets $K \subset \gf_q^n$, when $q$ is even. 
 Our main result is
$$
|K| < 
\left\{ \begin{array}{ll} \vspace*{0.3cm}
\frac{2q}{q + \sqrt{q} - 2} \left( \frac{q + \sqrt{q}}{2} \right)^n  & \textrm{if $q$ is an even power of $2$,} \\
  \frac{8q}{5q +2\sqrt{q}-3} \left( \frac{5q+ 2\sqrt{q}+5}{8} \right)^n & \textrm{if $q$ is an odd power of $2$}.
\end{array}\right.
$$

In this paper we use the following result by Bluher~\cite{Blu04}:

\begin{theorem}\cite[Theorem 5.6]{Blu04}\label{thm-blu}
  Let $q = 2^m$ and $0 \leq i < m$ with $d = \gcd(i,m)$. Let $N_0$ denote the number of $b \in \gf_q^*$ such that $x^{2^i+1} + bx + b$ has no root in $\gf_q$.
\begin{itemize} \vspace*{0.1cm}
  \item[(i)] If $m/d$ is even, then $N_0 = \displaystyle{\frac{2^d(q-1)}{2(2^d+1)}}$. \vspace*{0.2cm}
  \item[(ii)] If $m/d$ is odd, then $N_0 = \displaystyle{\frac{2^d(q+1)}{2(2^d + 1)}}$.
\end{itemize}
\end{theorem}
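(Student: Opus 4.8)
The plan is to reduce the statement to a value-distribution problem for the single map $T(w):=w^{2^i+1}+w$ on $\gf_q$, and then to recover that distribution from the low-order power moments of its fibre sizes. First I would note that, since $b\neq0$, neither $x=0$ nor $x=1$ is a root of $g_b(x):=x^{2^i+1}+bx+b$, so all roots lie in $\gf_q^*$. The substitution $x=1/z$ followed by clearing denominators turns $g_b(x)=0$ into $bz^{2^i}(z+1)=1$, and the shift $z=w+1$ (using $(w+1)^{2^i}=w^{2^i}+1$) collapses this to $w^{2^i+1}+w=1/b$. This yields a bijection between the roots of $g_b$ in $\gf_q$ and the solutions of $T(w)=1/b$, so the number of roots of $g_b$ equals $|T^{-1}(1/b)|$. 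As $T^{-1}(0)=\{0,1\}$ has size two, it follows that $N_0=M_0$, where $M_k:=\#\{c\in\gf_q:|T^{-1}(c)|=k\}$.

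Next I would determine the possible fibre sizes. For $s\neq0$ the difference $T(w+s)+T(w)=s\,w^{2^i}+s^{2^i}w+(s^{2^i+1}+s)$ is an $\gf_2$-affine function of $w$ whose linear part $L_s(w)=s\,w^{2^i}+s^{2^i}w$ has kernel $\{w:(w/s)^{2^i-1}=1\}$ of size $1+\gcd(2^i-1,q-1)=2^d$. A direct analysis of these affine equations---the standard structural step for Gold trinomials---shows $|T^{-1}(c)|\in\{0,1,2,2^d+1\}$, so that the four multiplicities $M_0,M_1,M_2,M_{2^d+1}$ are the only unknowns.

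I would then bring in the power sums $S_j:=\sum_c|T^{-1}(c)|^j$ for $j=1,2,3$. The identities $\sum_kM_k=q$ and $S_1=\sum_kkM_k=q$ are immediate. The decisive quantity is $S_2=\sum_s\#\{w:T(w+s)=T(w)\}=q+\sum_{s\neq0}\#\{w:L_s(w)=s^{2^i+1}+s\}$, where for $s\neq0$ the inner equation has $0$ or $2^d$ solutions. Substituting $u=w/s$ reduces its solvability to $u^{2^i}+u=1+s^{-2^i}$, and since the image of $u\mapsto u^{2^i}+u$ equals the hyperplane $\ker\tr_{\gf_q/\gf_{2^d}}$, the equation is solvable exactly when $\tr_{\gf_q/\gf_{2^d}}(s^{-1})=\tr_{\gf_q/\gf_{2^d}}(1)$. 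Because $\tr_{\gf_q/\gf_{2^d}}(1)=(m/d)\bmod2$, this is precisely where the two cases separate: the number of admissible $s$ is $q/2^d-1$ if $m/d$ is even and $q/2^d$ if $m/d$ is odd, giving $S_2=2q-2^d$ or $S_2=2q$ respectively.

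Finally, the four relations $\sum_kM_k=q$, $S_1$, $S_2$, $S_3$ and the four multiplicities $M_0,M_1,M_2,M_{2^d+1}$ form an invertible Vandermonde system in the nodes $0,1,2,2^d+1$, which I would solve for $M_0=N_0$. I expect the third moment $S_3=\sum_{s_1,s_2}\#\{w:L_{s_1}(w)=s_1^{2^i+1}+s_1,\ L_{s_2}(w)=s_2^{2^i+1}+s_2\}$ to be the main obstacle: the simultaneous pair of affine-linearized equations must be treated by cases according to whether $s_1/s_2\in\gf_{2^d}$ (eliminating $w^{2^i}$ gives a unique $w$ when $s_1/s_2\notin\gf_{2^d}$, and $0$ or $2^d$ solutions otherwise), and the resulting counts must once more be organized by the relative trace so that the even/odd dichotomy propagates consistently. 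A tempting alternative that avoids $S_3$ is to write $N_0$ via the additive character sum $\tfrac1q\sum_{\lambda}\psi(\lambda c)\sum_w\psi(\lambda T(w))$ with $\psi(x)=(-1)^{\tr_{\gf_q/\gf_2}(x)}$; here $w\mapsto\tr_{\gf_q/\gf_2}(\lambda w^{2^i+1})$ is a quadratic Boolean form, so each inner Weil sum is $0$ or $\pm2^{(m+d)/2}$ by the classical theory of Gold functions, and summing these contributions reproduces the same two formulas---the difficulty there being the exact evaluation of the Gold Weil sums.
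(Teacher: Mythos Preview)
The paper does not prove this statement at all: Theorem~\ref{thm-blu} is quoted verbatim from Bluher~\cite{Blu04} and used as a black box, so there is no ``paper's own proof'' to compare your proposal against.

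As for the proposal itself, the reduction is correct and elegant: the change of variables $x=1/z$, $z=w+1$ does turn $x^{2^i+1}+bx+b=0$ into $T(w):=w^{2^i+1}+w=1/b$, and since $T^{-1}(0)=\{0,1\}$ you correctly identify $N_0$ with $M_0$. Your computation of $S_2$ via the relative trace $\tr_{\gf_q/\gf_{2^d}}$ is also right and is exactly where the parity of $m/d$ enters. However, two genuine gaps remain. First, the structural claim that $\lvert T^{-1}(c)\rvert\in\{0,1,2,2^d+1\}$ is itself a nontrivial part of Bluher's analysis (a priori $T$ has degree $2^i+1$, so fibre sizes could be much larger); you invoke it as ``the standard structural step'' but do not supply the argument, and without it the Vandermonde system in four nodes is unjustified. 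Second, and as you yourself flag, neither the third moment $S_3$ nor the Gold--Weil character sum is actually evaluated; you outline the case split $s_1/s_2\in\gf_{2^d}$ versus $s_1/s_2\notin\gf_{2^d}$ but stop short of carrying it through. So what you have is a sound plan with the two hardest steps still open, not a proof.
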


\section{On Kakeya sets constructed using Gold power functions}

In this section, we use the Gold power functions $f(x)= x^{2^i+1}$ to derive  upper bounds on the minimum size of Kakeya sets $K \subset \gf_q^n$ with $q$ even.

Theorem \ref{thm-blu} allows us to determine explicitly the size of the image set $I_f(t) := \{f(x) + tx: x \in \gf_q \}$ with $f(x) = x^{2^i + 1}$ and $t \in \gf_q$.

\begin{proposition}\label{pro-issize}
  Let $q = 2^m$, $f(x) = x^{2^i + 1} \in \gf_q[x]$ with $0 \leq i < m$, and $d = \gcd(i,m)$. Set $I_f(t) := \{f(x) + tx: x \in \gf_q \}$ for $t \in \gf_q$. We have: 
  \begin{itemize}
    \item[(i)] if $m/d$ is even, then $|I_f(0)| = \displaystyle{1 + \frac{q - 1}{2^d+1}}$, and $|I_f(t)| = \displaystyle{ \frac{q+1}{2} + \frac{q-1}{2(2^d+1)}} $ for any $t \in \gf_q^*$;
    \item[(ii)] if $m/d$ is odd, then $|I_f(0)| = \displaystyle{q}$, and $|I_f(t)| = \displaystyle{ \frac{q-1}{2} + \frac{q+1}{2(2^d + 1)}} $ for any $ t \in \gf_q^*$.
  \end{itemize}
\end{proposition}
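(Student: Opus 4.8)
The plan is to split into the cases $t=0$ and $t\in\gf_q^*$. For $t=0$ the computation is a power‑residue count, while for $t\ne 0$ I will rescale the relevant trinomial so that Theorem~\ref{thm-blu} applies directly.

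For $t=0$ we have $I_f(0)=\{0\}\cup\{x^{2^i+1}:x\in\gf_q^*\}$, and the latter set is the image of the group endomorphism $x\mapsto x^{2^i+1}$ of $\gf_q^*$, of size $(q-1)/\gcd(2^i+1,q-1)$. The value of $\gcd(2^i+1,2^m-1)$ is classical: writing it as $(2^{\gcd(2i,m)}-1)/(2^{\gcd(i,m)}-1)$ (using $\gcd(2^i-1,2^i+1)=1$, $(2^i-1)(2^i+1)=2^{2i}-1$, and $\gcd(2^a-1,2^b-1)=2^{\gcd(a,b)}-1$) and comparing $2$‑adic valuations, so that $\gcd(2i,m)=d$ or $2d$ according as $m/d$ is odd or even, one gets $1$ when $m/d$ is odd and $2^d+1$ when $m/d$ is even. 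Hence $x\mapsto x^{2^i+1}$ is a bijection of $\gf_q$ and $|I_f(0)|=q$ in case (ii), while $|I_f(0)|=1+(q-1)/(2^d+1)$ in case (i).

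Now let $t\in\gf_q^*$. Since $0=f(0)+t\cdot 0$ we have $0\in I_f(t)$, so it suffices to count the nonzero elements of $I_f(t)$. For $v\in\gf_q^*$, membership $v\in I_f(t)$ is equivalent to the trinomial $x^{2^i+1}+tx+v$ having a root in $\gf_q$; substituting $x=(v/t)w$ and multiplying through by $(t/v)^{2^i+1}$ transforms this into $w^{2^i+1}+bw+b$ with $b=t^{2^i+1}/v^{2^i}$, and since $w\mapsto(v/t)w$ is a bijection of $\gf_q$, one trinomial has a root in $\gf_q$ exactly when the other does. Moreover $v\mapsto b=t^{2^i+1}/v^{2^i}$ is a bijection of $\gf_q^*$, being the composition of the bijection $v\mapsto v^{2^i}$ with the bijection $y\mapsto t^{2^i+1}/y$. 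Therefore the number of nonzero elements of $I_f(t)$ equals the number of $b\in\gf_q^*$ for which $x^{2^i+1}+bx+b$ has a root in $\gf_q$, which by Theorem~\ref{thm-blu} is $(q-1)-N_0$. Thus $|I_f(t)|=q-N_0$; inserting the two values of $N_0$ from Theorem~\ref{thm-blu} and simplifying the fractions yields exactly the expressions claimed in (i) and (ii).

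The gcd evaluation for $t=0$ and the closing fraction simplification are routine. The one step carrying the actual content is the rescaling $x=(v/t)w$ that brings $x^{2^i+1}+tx+v$ into Bluher's normalised shape $w^{2^i+1}+bw+b$, together with the observation that $v\mapsto b$ permutes $\gf_q^*$; this is precisely what turns Theorem~\ref{thm-blu} into a count of $|I_f(t)|$.
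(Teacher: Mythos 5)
Your proposal is correct and follows essentially the same route as the paper: evaluate $\gcd(2^i+1,2^m-1)$ for $t=0$, and for $t\ne 0$ rescale $x^{2^i+1}+tx+v$ by $x=(v/t)w$ into Bluher's normal form $w^{2^i+1}+bw+b$ with $b=t^{2^i+1}/v^{2^i}$ ranging bijectively over $\gf_q^*$, then apply Theorem~\ref{thm-blu}. The only (immaterial) differences are that you count the elements lying in $I_f(t)$ rather than its complement, and that you spell out the gcd evaluation the paper merely cites.
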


\begin{proof}
  For $t = 0$, we have 
  $$
  |I_f(0)| = 1 +  \displaystyle{\frac{2^m - 1}{\gcd(2^m-1,2^i+1)}} .
  $$ 
  From the well-known fact (e.g.~\cite[Lemma 11.1]{McEliece87}) that
  \begin{equation*}
    \gcd(2^m-1, 2^i + 1) = \left\{ \begin{array}{ll}
      1 & \textrm{ if $m/d$ is odd,} \\
      2^d + 1 & \textrm{ if $m/d$ is even,}
    \end{array}\right.
  \end{equation*}
the assertion on $|I_f(0)|$ follows.

For $t \in \gf_q^*$, by definition, we have
  \begin{eqnarray*}
    |I_f(t)| & = & | \{f(x) + tx: x \in \gf_q\}| \\
    & = & |\gf_q| - | \{c \in \gf_q^*: f(x) + tx + c \textrm{ has no root in $\gf_q$} \} | \\
    & = & q - N_0' .
  \end{eqnarray*}
  To make use of Theorem~\ref{thm-blu}, we transform $f(x) + tx + c$ following the steps in \cite{Blu04}. Since $t \ne 0$ and $c \ne 0$, let $x = \displaystyle{\frac{c}{t} z}$, then
  \begin{eqnarray*}
    \lefteqn{f(x) + tx + c} \\
    & = & x^{2^i + 1} + tx + c \\
    & = & \frac{c^{2^i +1}}{t^{2^i+1}} \left( z^{2^i+1} + \frac{t^{2^i+1}}{c^{2^i}} z + \frac{t^{2^i+1}}{c^{2^i}} \right).
  \end{eqnarray*}
 
  Since
  $$
  \left\{ \frac{t^{2^i + 1}}{c^{2^i}} : c \in \gf_q^* \right\} = \gf_q^*,
  $$
  we have $N_0' = N_0$, where $N_0$ denotes the number of $b \in \gf_q^*$ such that $x^{2^i+1} + bx + b$ has no root in $\gf_q$. The conclusion then follows from Theorem~\ref{thm-blu}.
\qed
 \end{proof}

Proposition \ref{pro-issize} shows that the smallest Kakeya sets constructed using Gold power functions are achieved
with $i =m/2$ for an even $m$, and $i=0$ for an odd $m$. 
The discussion below shows that the choice $i=m/2$ implies a better upper bound on Kakeya sets compared with the one given in Theorem  \ref{thm-levbound}. 
The idea to use $f(x) = x^{2^{m/2}+1}$ to improve the bound in Theorem
\ref{thm-levbound} appears in \cite{kyureg-wang-wcc}, and was independently suggested by David Speyer in \cite{mathoverflow}.
Observe that $f(x) = x^3$ chosen in~\cite{KLSS11} to prove the bound for $m$ even is the Gold power function with $i = 1$ and $d =1$.

When $m/d$ is odd, $|I_f(0)| = q$, and therefore the  bound obtained by the Gold power functions cannot be good for large $n$. However, for small values of $n$, it is better than the one of 
Theorem \ref{thm-levbound} \cite{kyureg-wang-wcc}.

Next  consider the function $f(x) = x^{2^{m/2}+1}$. In particular, we show that this function yields a better upper bound on the minimum size of Kakeya sets in $\gf_q^n$ when $q$ is an even power
of $2$. First we present a direct proof for the size of the sets $\{ x^{2^{m/2}+1} +tx \,:\, x \in \gf_q \}, t \in \gf_q$. 

\begin{theorem}\label{meven}
Let $m$ be an even integer.  Then
$$
|I(0)| := |\{ x^{2^{m/2}+1}  \,:\, x \in \gf_q \}| = 2^{m/2},
$$
 and 
$$
|I(t)| := |\{ x^{2^{m/2}+1} +tx \,:\, x \in \gf_q \}| = \frac{2^m + 2^{m/2}}{2} 
$$ 
for any $t \in \gf_q^*$.

\end{theorem}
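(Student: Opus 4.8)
The plan is to bypass Bluher's theorem and work directly with the subfield structure. Write $q = 2^m = s^2$ with $s = 2^{m/2}$, so that $\gf_s$ is a subfield of $\gf_q$ of index $2$, the nontrivial $\gf_s$-automorphism of $\gf_q$ is $x \mapsto x^s$, and the Gold exponent $2^{m/2}+1$ equals $s+1$. Then $N(x) := x^{s+1}$ and $T(x) := x + x^s$ are precisely the norm and trace maps $\gf_q \to \gf_s$; the norm is surjective onto $\gf_s$ (since $N(\gf_q^*) = \gf_s^*$), and $\ker T = \gf_s$ because $\gf_s \subseteq \ker T$ and both sets have $s$ elements. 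The first claim is then immediate: $I(0) = \{x^{s+1} : x \in \gf_q\} = N(\gf_q) = \gf_s$, of size $2^{m/2}$.

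For $t \in \gf_q^*$ I would first normalize $t$ away. Substituting $x = t^s z$, which is a bijection of $\gf_q$, and using $t^{s^2} = t^q = t$, one gets $x^{s+1} + tx = t^{s+1}(z^{s+1} + z) = N(t)\,(z^{s+1}+z)$; since $N(t) \in \gf_s^*$, multiplication by $N(t)$ is a bijection, so $|I(t)| = |\{z^{s+1} + z : z \in \gf_q\}|$ for every nonzero $t$. It therefore suffices to count the image of $h(z) := z^{s+1} + z$.

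The crucial step — the part I expect to be the main obstacle — is to reduce the apparently degree-$(s+1)$ equation $h(z) = c$ to a quadratic over $\gf_s$. If $z^{s+1}+z = c$, then $z^{s+1} = z + c$ lies in $\gf_s$, which forces $z \in c + \gf_s$; writing $z = c + \lambda$ with $\lambda \in \gf_s$ and expanding $N(c+\lambda) = (c+\lambda)(c^s + \lambda) = N(c) + T(c)\lambda + \lambda^2$, the equation $h(z) = c$ becomes $\lambda^2 + (T(c)+1)\lambda + N(c) = 0$ with $\lambda \in \gf_s$. So the fibre size $R(c) := |\{z \in \gf_q : h(z) = c\}|$ equals the number of roots in $\gf_s$ of this quadratic: if $T(c) = 1$ it collapses to $\lambda^2 = N(c)$, which has exactly one root since squaring is a bijection on $\gf_s$, so $R(c) = 1$; if $T(c) \ne 1$ the linear coefficient is nonzero, hence in characteristic $2$ the quadratic has $0$ or $2$ roots.

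Finally I would read off the answer by a first-moment count. Put $S := \{c \in \gf_q : T(c) = 1\}$, so $|S| = s$ and $R(c) = 1$ for $c \in S$, and let $a$ be the number of $c \notin S$ with $R(c) = 2$. Since $\sum_{c \in \gf_q} R(c) = q$ (each $z$ is counted once), we get $s + 2a = q$, so $a = (q-s)/2$; the image $\{h(z) : z \in \gf_q\} = \{c : R(c) \ge 1\}$ then has size $|S| + a = s + (q-s)/2 = (q+s)/2 = (2^m + 2^{m/2})/2$. The only genuine idea is the collapse $z \in c + \gf_s$; after that the proof is bookkeeping, with the sole case split $T(c) = 1$ versus $T(c) \ne 1$, and it goes through unchanged even in the smallest case $m = 2$.
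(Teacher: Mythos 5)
Your proof is correct and follows essentially the same route as the paper: both identify $x\mapsto x^{2^{m/2}+1}$ with the norm onto $\gf_{2^{m/2}}$ to handle $I(0)$, normalize $t$ away by a substitution that extracts the factor $N(t)\in\gf_{2^{m/2}}^*$, and then use the fact that the norm lands in the subfield to confine each fibre to a coset of $\gf_{2^{m/2}}$, reducing everything to a quadratic over the subfield whose root count is governed by whether $T(\cdot)=1$. The only (immaterial) difference is that you analyze fibres from the codomain side ($R(c)\in\{0,1,2\}$ plus a first-moment count), whereas the paper argues from the domain side that $g$ is injective on trace-$1$ elements and $2$-to-$1$ elsewhere.
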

\begin{proof}
The identity on $I(0)$ is clear, since the  image set of the function $x \mapsto x^{2^{m/2}+1}$ is $\mathbb{F}_{2^{m/2}}$.  Let $t \in \gf_q^*$. Note  that $|I(t)| =|I(1)|$. Indeed, there is $s \in \gf_q$, such that $t = s^{2^{m/2}}$ and then
$$
x^{2^{m/2}+1} +tx = s^{2^{m/2}+1}\cdot \left( (x/s)^{2^{m/2}+1} + (x/s)\right).
$$ 
Hence it is enough to compute $I(1)$. Let $Tr(x) = x^{2^{m/2}}+x$ be the trace map from $\gf_q$ onto its subfield  $\mathbb{F}_{2^{m/2}}$. Recall that $Tr$
is a $\mathbb{F}_{2^{m/2}}$-linear surjective map. 

Set $g(x) = x^{2^{m/2}+1} +x$. If $y,z \in \gf_q$ are such that
$$
g(z) = z^{2^{m/2}+1} +z = y^{2^{m/2}+1} +y = g(y),
$$
then $z = y + u$ for some $u \in \mathbb{F}_{2^{m/2}}$, since the image set of the function $x \mapsto x^{2^{m/2}+1}$ is $\mathbb{F}_{2^{m/2}}$.
Further, for any $u \in \mathbb{F}_{2^{m/2}}$
$$
g(y+u) = (y + u)^{2^{m/2}+1} +y+u = y^{2^{m/2}+1} +y + u(y^{2^{m/2}}+y) + u^2 + u.
$$
Hence, $g(y) = g(y+u)$ if and only if
$$
 u(y^{2^{m/2}}+y) + u^2 + u = u(Tr(y)+ u+1)=0.
$$
Consequently, two distinct elements $y$ and $z$ share the same image under the function $g$ if and only $Tr(y) \ne 1$ and $z = y + Tr(y)+1$. 
This shows that $g$ is injective on  the set $\mathcal{O}$ of elements from $\gf_q$ having trace 1, and 2-to-1 on  $\gf_q \setminus \mathcal{O}$,
completing the proof.

\qed \end{proof}

\begin{theorem}
Let $q=2^m$ with $m$ even and $n \geq 1$. There is a Kakeya set $K \subset \gf_q^n$ such that
$$
|K| < \frac{2q}{q + \sqrt{q} - 2} \left( \frac{q + \sqrt{q}}{2} \right)^n.
$$
\end{theorem}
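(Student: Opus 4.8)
The plan is to plug the function $f(x) = x^{2^{m/2}+1}$ into the construction recalled in the introduction and estimate the resulting size via formula~\eqref{eqn-kssize}. By Theorem~\ref{meven} we know exactly the sizes of the image sets: $|I_f(0)| = \sqrt{q}$, and $|I_f(t)| = (q+\sqrt{q})/2$ for every $t \in \gf_q^*$. Since $f$ is non-linear, the set $K$ defined in the introduction is a Kakeya set, so it suffices to bound $|K|$. I would first split the sum in~\eqref{eqn-kssize} as
$$
|K| = \frac{|I_f(0)|^n - 1}{|I_f(0)| - 1} + (q-1)\cdot \frac{\left(\frac{q+\sqrt{q}}{2}\right)^n - 1}{\frac{q+\sqrt{q}}{2} - 1},
$$
and then drop the negative $-1$'s in the numerators (and the $t=0$ term, which contributes only $O(q^{(n-1)/2})$ and is negligible) to get the clean upper bound
$$
|K| < (q-1)\cdot \frac{\left(\frac{q+\sqrt{q}}{2}\right)^n}{\frac{q+\sqrt{q}}{2} - 1} = \frac{2(q-1)}{q+\sqrt{q}-2}\left(\frac{q+\sqrt{q}}{2}\right)^n.
$$

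The last step is to absorb the $t=0$ contribution. One has $|I_f(0)|^n/(|I_f(0)|-1) = q^{n/2}/(\sqrt q - 1)$, and one checks this is small enough that adding it keeps the whole expression below $\frac{2q}{q+\sqrt q-2}\left(\frac{q+\sqrt q}{2}\right)^n$; i.e. the difference $\frac{2q}{q+\sqrt q-2} - \frac{2(q-1)}{q+\sqrt q-2} = \frac{2}{q+\sqrt q-2}$ times $\left(\frac{q+\sqrt q}{2}\right)^n$ comfortably dominates $q^{n/2}/(\sqrt q-1)$ for all $n\ge 1$ and all $q=2^m$ with $m$ even (here one uses $(q+\sqrt q)/2 > \sqrt q$, so the left side grows faster, and checks the base case $n=1$ by hand). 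Putting the two bounds together yields
$$
|K| < \frac{2q}{q+\sqrt q-2}\left(\frac{q+\sqrt q}{2}\right)^n,
$$
as claimed.

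I expect the only mildly delicate point to be this final bookkeeping: making sure the coefficient $\frac{2(q-1)}{q+\sqrt q-2}$ can be relaxed to $\frac{2q}{q+\sqrt q-2}$ after reinstating the $t=0$ term, uniformly in $n$. Everything else is a direct substitution of Theorem~\ref{meven} into~\eqref{eqn-kssize} together with the observation that $f(x)=x^{2^{m/2}+1}$ is non-linear so that the construction of~\cite{KLSS11} applies. A clean way to handle the base case and the uniformity at once is to note that for $n=1$ the bound reads $|K|=q$, which is trivially below the right-hand side, and that multiplying $n$ by the ratio $(q+\sqrt q)/2 \ge \sqrt q \cdot \frac{1+1/\sqrt q}{1} > \sqrt q$ on the dominant term outpaces the $\sqrt q$-growth of the correction term, so the inequality propagates.
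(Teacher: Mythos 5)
Your proposal is correct and is exactly the paper's (unwritten) argument: the paper's proof consists solely of citing (\ref{eqn-kssize}) and Theorem~\ref{meven}, and your computation supplies the bookkeeping, including the necessary separate check at $n=1$ (where the relaxed termwise comparison you describe actually fails, but the exact sum equals $q$, which is below the bound). Nothing further is needed.
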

\begin{proof}
The statement follows from (\ref{eqn-kssize}) and Theorem \ref{meven}.
\qed \end{proof}

\section{On Kakeya sets constructed using the function $x \mapsto x^4+x^3$}

In this section we obtain an upper bound on the minimum size of Kakeya sets constructed using
the function $x \mapsto  x^4+x^3$ on $\gf_q$. For every $t \in \gf_q$, let $g_t : \gf_q \to \gf_q$ be defined by
$$
g_t(x) := x^4 +x^3 + tx.
$$
Next we study the image sets of functions $g_t(x)$. Given $y \in \gf_q$, let $g_t^{-1}(y)$ be the set of preimages of $y$,
that is
$$
g_t^{-1}(y) := \{ x \in \gf_q \,|\, g_t(x) =y \}.
$$
Further, for any integer $k \geq 0$ put $\omega_t(k)$ to denote the number of elements in $\gf_q$ having exactly $k$ preimages
under $g_t(x)$, that is
$$
\omega_t(k) := |\{ y \in \gf_q \,:\, |g_t^{-1}(y)| = k \}|.
$$
Note that  $\omega_t(k) =0$ for all $k \geq 5$, since the degree of $g_t(x)$ is 4. The next lemma establishes the value of
$\omega_t(1)$:

\begin{lemma}\label{omega-1}
Let $q = 2^m$ and $t \in \gf_q^*$. Then
\begin{itemize}
\item if $m$ is odd
$$
\omega_t(1) = \left\{ \begin{array}{ll} \vspace*{0.2cm}
  \frac{q+1}{3} & \mbox{ if } Tr(t) =0  \\
  \frac{q+4}{3}  & \mbox{ if } Tr(t) = 1
\end{array} \right .
$$
\item if $m$ is even
$$
\omega_t(1) = \left\{ \begin{array}{ll} \vspace*{0.2cm}
  \frac{q-1}{3} & \mbox{ if } Tr(t)=0 \\
  \frac{q+2}{3}  & \mbox{ if } Tr(t) =1.
\end{array} \right .
$$
\end{itemize}

\end{lemma}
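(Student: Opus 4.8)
The plan is to count $\omega_t(1)$ by understanding the fiber structure of $g_t(x) = x^4 + x^3 + tx$ over $\gf_q$. Since $\deg g_t = 4$, every nonempty fiber has size $1,2,3$ or $4$, and (because we are in characteristic $2$ and $g_t$ is additive up to the cubic term) I expect the generic fiber to have size $2$, with size-$3$ and size-$4$ fibers being rare. The key algebraic tool is the substitution $x \mapsto x + a$: two elements $y, z$ with $y \ne z$ lie in the same fiber iff $g_t(y) = g_t(z)$, i.e. iff $u := y + z \ne 0$ satisfies
$$
g_t(y+u) + g_t(y) = u^4 + u^3 + u^2 y + u y^2 + u^2 y^2 + tu = 0.
$$
Wait — let me instead expand carefully: $(y+u)^4 = y^4 + u^4$, $(y+u)^3 = y^3 + y^2 u + y u^2 + u^3$, so $g_t(y+u)+g_t(y) = u^4 + u^3 + y^2 u + y u^2 + tu = u\,(u^3 + u^2 + y^2 + y u + t)$. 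Dividing by $u \ne 0$, the condition becomes
$$
u^2 + yu + (u^3 + u^2 + t) + y^2 = 0 \quad\Longleftrightarrow\quad y^2 + yu + (u^3 + t) = 0,
$$
a quadratic in $y$ for each fixed $u \in \gf_q^*$ (I will double-check the coefficient of $u^2$ in this reduction while writing it out; the essential point is that for each $u$ one gets a quadratic $y^2 + uy + c(u) = 0$ with $c(u) = u^3 + t$ or similar). Such a quadratic has $0$ or $2$ solutions $y$ in $\gf_q$ according to whether $\Tr(c(u)/u^2) = \Tr(u + t/u^2)$ equals $1$ or $0$.

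Next I would translate this into a clean count. The number of \emph{ordered} pairs $(y,z)$ with $y \ne z$ and $g_t(y) = g_t(z)$ equals the number of pairs $(u,y)$ with $u \in \gf_q^*$ and $y^2 + uy + c(u) = 0$, which is $2 \cdot |\{ u \in \gf_q^* : \Tr(u + t/u^2) = 0 \}|$. On the other hand this same quantity equals $\sum_k \omega_t(k)\, k(k-1) = 2\omega_t(2) + 6\omega_t(3) + 12\omega_t(4)$. Combining with the two obvious identities $\sum_k \omega_t(k) = |I(t)|$ (the image size) and $\sum_k k\,\omega_t(k) = q$, I get three linear relations among the four unknowns $\omega_t(1),\dots,\omega_t(4)$. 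To pin down $\omega_t(1)$ I then need control of the size-$3$ and size-$4$ fibers: a fiber of size $\ge 3$ forces three distinct roots $y,z,w$ pairwise satisfying the relation above, which (eliminating $y$) forces $u = y+z$, $u' = y+w$, $u+u' = z+w$ to all be roots of a single equation, and a short computation should show that size-$4$ fibers occur for exactly one value of $y$ (or none) and size-$3$ fibers cannot occur at all, or occur in a controlled way — the Gold-type structure $u^3 = $ const ties the admissible differences $u$ together. I expect the count of size-$\ge 3$ fibers to be $0$ or $1$ depending only on a trace condition, which is where the $\Tr(t)$ dichotomy and the parity of $m$ enter.

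The main obstacle is the clean evaluation of $S(t) := |\{ u \in \gf_q^* : \Tr(u + t/u^2) = 0 \}|$, equivalently of the character sum $\sum_{u \in \gf_q^*} (-1)^{\Tr(u + t u^{-2})}$. After the substitution $u \mapsto u^{-1}$ (note $u \mapsto u^{-2}$ is a bijection of $\gf_q^*$ since $\gcd(2,q-1)=1$) this becomes a sum of the shape $\sum_u (-1)^{\Tr(u^{-1} + t u^2)}$, a Kloosterman-type / Weil sum attached to the polynomial $tu^2 + u^{-1}$; for $t \in \gf_q^*$ one can rescale $u$ to reduce to a single normalized sum whose value depends on whether $t$ is (up to the relevant power map) in a fixed coset, which is governed precisely by $\Tr(t)$. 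Evaluating this sum — I would either invoke the known evaluation of such cubic Weil sums over $\gf_{2^m}$, split by parity of $m$, or relate it directly back to Bluher's Theorem \ref{thm-blu} by recognizing $tu^2 + u + \text{(lin.)}$ as a twisted Gold polynomial — yields the four case values. Once $S(t)$ is known and the number of large fibers is determined, solving the linear system gives the stated formulas for $\omega_t(1)$, and a final sanity check against small $m$ (e.g. $m=3,4$ by direct enumeration) confirms the constants $\tfrac{q+1}{3}, \tfrac{q+4}{3}, \tfrac{q-1}{3}, \tfrac{q+2}{3}$ and in particular their integrality for the relevant residues of $q \bmod 3$.
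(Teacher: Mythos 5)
Your overall strategy does not work, and the failure is not in the coefficient slip you flagged but in the central quantity you rely on. Setting $z=y+u$, the correct relation is $y^2+uy+(u^3+u^2+t)=0$, so the number of ordered pairs of distinct elements lying in a common fiber is $2\,\lvert\{u\in\gf_q^{*}: Tr(u+1+tu^{-2})=0\}\rvert$. Substituting $u\mapsto w^{-1}$ and using $Tr(tw^2)=Tr(\sqrt{t}\,w)$, the character sum you need to evaluate is, up to sign and an additive constant, the binary Kloosterman sum $\sum_{w\in\gf_q^{*}}(-1)^{Tr(w^{-1}+\sqrt{t}\,w)}$. This has no closed-form evaluation: by Lachaud--Wolfmann such sums take essentially every admissible value in $[-2\sqrt{q},\,2\sqrt{q}]$ as $t$ varies, so they are certainly not functions of $Tr(t)$ and the parity of $m$ alone. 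The paper itself confirms this: exactly this pair count reappears in Theorem \ref{thm-new} as the number $v$ of points on the curve $x^2+zx=z^3+z^2+t$, and there it is only \emph{bounded} via Hasse, never evaluated. For the same reason your hope that $\omega_t(4)$ is $0$ or $1$ is false: once $\omega_t(1)$ and $\omega_t(3)$ are fixed, $\omega_t(2)$ and $\omega_t(4)$ must absorb the variation of the Kloosterman sum, so they depend on more than $Tr(t)$ and the parity of $m$. Your three linear relations therefore cannot isolate $\omega_t(1)$.

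The paper's proof goes the other way: it characterizes the size-one fibers directly rather than through a second-moment count. For $y\ne t^2$ the element $\sqrt{t}=t^{2^{m-1}}$ is not a root of $x^4+x^3+tx+y$, and the substitution $x\mapsto x^{-1}+\sqrt{t}$ turns this quartic into the \emph{affine} polynomial $(t^2+y)x^4+\sqrt{t}\,x^2+x+1$ with the same number of roots. An affine polynomial has exactly one root if and only if its linearized part has trivial kernel, i.e.\ if and only if $(t^2+y)x^3+\sqrt{t}\,x+1$ has no root in $\gf_q$; after rescaling, as $y$ runs over $\gf_q\setminus\{t^2\}$ the resulting parameter $b$ in $z^3+bz+b$ runs over all of $\gf_q^{*}$, so Bluher's Theorem \ref{thm-blu} with $i=1$ counts these $y$ exactly, giving $\frac{q+1}{3}$ for $m$ odd and $\frac{q-1}{3}$ for $m$ even. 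The exceptional value $y=t^2$ gives the factorization $(x^2+t)(x^2+x+t)$ and contributes one further singleton fiber precisely when $Tr(t)=1$. Any repair of your argument would need this linearized-polynomial mechanism (or an equivalent exact count) anyway; the pair-counting identity alone cannot yield the lemma.
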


\begin{proof}
Let $y \in \gf_q$ and $y \ne t^2$.
Then $t^{2^{m-1}}$ is not a solution of the following equation 
$$
h_{t,y}(x):= g_t(x) + y = x^4 +x^3 +tx +y =0.
$$
Observe that the number of the  solutions for the above equation 
is equal to the one of
$$
(t^2+y)x^4 +t^{2^{m-1}}x^2 + x +1 = x^4\cdot  h_{t,y}\left(\frac{1}{x} + t^{2^{m-1}} \right) =0.
$$
Hence  either $\omega_t(1)$ or $\omega_t(1)-1$ is equal to the number of elements $y \in \gf_q$ such that the affine polynomial
\begin{equation}\label{affine}
(t^2+y)x^4 +t^{2^{m-1}}x^2 + x +1
\end{equation}
has exactly one zero in $\gf_q$, depending on the number of preimages of $g_t(x)$ for $t^2$. Equation (\ref{affine}) has exactly 1 solution if and only if the linearized
polynomial 
$$
(t^2+y)x^4 +t^{2^{m-1}}x^2 + x
$$
has no non-trivial zeros, or equivalently
\begin{equation}\label{equat-bl}
u(x) := (t^2+y)x^3 +t^{2^{m-1}}x + 1
\end{equation}
has no zeroes. 

Since  $t^2+y \ne 0$, the number of zeroes of $u(x)$ is equal to the one of 
$$
\frac{1}{t^2+y} \cdot u\left(\frac{1}{t^{2^{m-1}}} z\right) = \frac{1}{t^{2^{m-1} + 1}} \left( z^3 + \frac{t^{2^{m-1} + 1}}{t^2 + y} z + \frac{t^{2^{m-1} + 1}}{t^2 + y} \right).
$$
Note that 
$$
\left\{ \frac{t^{2^{m-1} + 1}}{t^2 + y} : y \in \gf_q, y \ne t^2 \right\} = \gf_q^*.
$$
Hence by Theorem \ref{thm-blu} with $i=1$,  the number of elements $y \in \gf_q, y \ne t^2$, such that
(\ref{equat-bl}) has no zeros is
$$
\left\{ \begin{array}{ll} \vspace*{0.2cm}
  \frac{q+1}{3} & \mbox{ if $m$ is odd} \\ 
   \frac{q-1}{3} & \mbox{ if $m$ is even.}
\end{array} \right .
$$
To complete the proof, it remains to consider $y = t^2$. In this case
$$
g_t(x) + y = x^4 +x^3 +tx +t^2 = (x^2+t)(x^2+x+t),
$$
and therefore $g_t(x)+t^2$ has exactly one solution if $Tr(t) =1$ and exactly 3 solutions if $Tr(t)=0$.

\qed \end{proof}

\begin{lemma}\label{omega-3}
Let $q = 2^m$ and $t \in \gf_q^*$. Then
$$
\omega_t(3) = \left\{ \begin{array}{ll} \vspace*{0.2cm}
  1  & \mbox{ if } Tr(t) =0  \\
  0  & \mbox{ if } Tr(t) = 1.
\end{array} \right .
$$
\end{lemma}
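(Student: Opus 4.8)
The plan is to reduce everything to counting the preimages of the single value $y = t^2$. First I would observe that if some $y \in \gf_q$ has $|g_t^{-1}(y)| = 3$, then the quartic $h_{t,y}(x) := g_t(x) + y = x^4 + x^3 + tx + y$ has exactly three distinct roots in $\gf_q$; since a degree-four polynomial over a field with exactly three roots in that field is forced to have a multiple root, its root multiset must be $\{a,a,b,c\}$ with $a,b,c \in \gf_q$ pairwise distinct. The repeated root $a$ is a common zero of $h_{t,y}$ and of its formal derivative, and in characteristic $2$
$$
h_{t,y}'(x) = x^2 + t = \left( x + t^{2^{m-1}} \right)^2 ,
$$
so the only candidate for a repeated root is $x = t^{2^{m-1}} = \sqrt{t}$. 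Since $h_{t,y}(\sqrt{t}) = t^2 + y$, this candidate is actually a root of $h_{t,y}$ precisely when $y = t^2$. Hence $\omega_t(3) \leq 1$, and $\omega_t(3) = 1$ if and only if $|g_t^{-1}(t^2)| = 3$.

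The second step is to count $|g_t^{-1}(t^2)|$ directly, reusing the factorisation already recorded at the end of the proof of Lemma~\ref{omega-1}:
$$
g_t(x) + t^2 = x^4 + x^3 + tx + t^2 = (x^2 + t)(x^2 + x + t).
$$
The factor $x^2 + t$ contributes the single root $\sqrt{t}$; the factor $x^2 + x + t$ has two roots in $\gf_q$ (necessarily distinct, as their sum equals $1$) when $Tr(t) = 0$ and no root when $Tr(t) = 1$; and $\sqrt{t}$ is not a root of $x^2 + x + t$ since $(\sqrt{t})^2 + \sqrt{t} + t = \sqrt{t} \neq 0$. Therefore $|g_t^{-1}(t^2)| = 3$ when $Tr(t) = 0$ and $|g_t^{-1}(t^2)| = 1$ when $Tr(t) = 1$, which together with the first step gives the asserted values of $\omega_t(3)$.

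The only point that needs care is the opening reduction: recognising that three distinct $\gf_q$-roots of a quartic force a multiple root, and then using the derivative in characteristic $2$ to pin that root down to $\sqrt{t}$, so that the fibre over $t^2$ is the only one that can have size $3$. Once that is in place, the conclusion follows immediately from the factorisation inherited from Lemma~\ref{omega-1}, so I do not anticipate any further difficulty.
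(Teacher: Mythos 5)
Your proof is correct, but the key step is argued differently from the paper. To rule out fibres of size $3$ over $y\ne t^2$, the paper simply reuses the computation from the proof of Lemma~\ref{omega-1}: there the substitution $x\mapsto \frac1x+t^{2^{m-1}}$ turns $h_{t,y}$ into the affine polynomial $(t^2+y)x^4+t^{2^{m-1}}x^2+x+1$, whose number of zeros is either $0$ or equal to the number of zeros of the associated linearized polynomial, hence a power of $2$ --- so no fibre over $y\ne t^2$ can have size $3$. You instead observe that a monic quartic with exactly three distinct roots in $\gf_q$ must split over $\gf_q$ with a repeated root, and that in characteristic $2$ the derivative $h_{t,y}'(x)=x^2+t=(x+t^{2^{m-1}})^2$ forces any repeated root to be $\sqrt t$, whence $y=h_{t,y}(\sqrt t)+y=t^2$. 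Both arguments are sound; yours is more elementary and self-contained (it does not need the linearized-polynomial machinery, only the derivative criterion), while the paper's is a one-line corollary of work it has already done. The treatment of the remaining value $y=t^2$ via the factorization $(x^2+t)(x^2+x+t)$ and the trace condition on $x^2+x+t$ is identical in both.
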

\begin{proof}
The proof of Lemma \ref{omega-1} shows that for any $y \ne t^2$, the number of solutions for $h_{t,y}(x) =0$ is a power of 2.
Hence only $t^2$ may have 3 preimages under $g_t(x)$, which is the case if and only if $Tr(t)=0$.
\qed \end{proof}

The next lemma describes the behavior of the function $x^4+x^3$:
\begin{lemma}\label{omega0}
Let $q = 2^m$ and $k\geq 1$ an integer. Then
\begin{itemize}
\item if $m$ is odd
$$
\omega_0(k) = \left\{ \begin{array}{ll} \vspace*{0.2cm}
  q/2 & \mbox{ if } k=2  \\
   0  & \mbox{  otherwise,}
\end{array} \right .
$$
in particular, the cardinality of $I(0) := \{ x^4+x^3 : x \in \gf_q \}$ is $q/2$.
\item if $m$ is even
$$
\omega_0(k) = \left\{ \begin{array}{ll} \vspace*{0.2cm}
  1 & \mbox{ if } k=2  \\ \vspace*{0.2cm}
  \frac{2(q-1)}{3} & \mbox{ if } k=1 \\ \vspace*{0.2cm}
   \frac{(q-4)}{12} & \mbox{ if } k=4 \\ \vspace*{0.2cm}
  0  & \mbox{ otherwise.}
\end{array} \right .
$$
\end{itemize}
\end{lemma}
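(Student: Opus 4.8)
The plan is to study the fibres of $g_0(x) = x^4+x^3 = x^3(x+1)$ directly, using the same substitution trick as in the proof of Lemma~\ref{omega-1}, specialised to $t=0$. First, $g_0(x)=0$ exactly for $x\in\{0,1\}$, so the fibre over $0$ has size $2$ for every $m$. For $y\in\gf_q^*$, put $x=1/w$: since $w^4 g_0(1/w) = 1+w$, the equation $g_0(x)=y$ is equivalent (over $\gf_q^*$, and $w=0$ resp.\ $x=0$ is a solution of neither side when $y\ne 0$) to the linearized equation $L_y(w) := yw^4 + w = 1$. As $L_y$ is an additive ($\gf_2$-linear) map on $\gf_q$, its preimage of $1$ is either empty or a coset of $\ker L_y$; hence $|g_0^{-1}(y)| \in \{0,\ |\ker L_y|\}$ for every $y\in\gf_q^*$.

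The second step is to compute $|\ker L_y|$. We have $\ker L_y = \{0\}\cup\{w\in\gf_q^*: w^3 = y^{-1}\}$, and $w^3 = y^{-1}$ has $\gcd(3,q-1)$ solutions when $y$ is a cube in $\gf_q^*$ and none otherwise. Since $\gcd(3,2^m-1)=1$ for $m$ odd and $=3$ for $m$ even, we conclude: if $m$ is odd then $|\ker L_y| = 2$ for all $y\in\gf_q^*$; if $m$ is even then $|\ker L_y| = 4$ for a nonzero cube $y$ and $|\ker L_y| = 1$ for a non-cube $y$.

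For $m$ odd this already finishes everything: every fibre of $g_0$ has size $0$ or $2$, so the identity $\sum_{k\ge 1} k\,\omega_0(k) = \sum_{y}|g_0^{-1}(y)| = q$ forces $\omega_0(2) = q/2$ and $\omega_0(k)=0$ for $k\ne 2$, and $|I(0)| = \sum_{k\ge 1}\omega_0(k) = q/2$. For $m$ even: over a non-cube $y$ the kernel of $L_y$ is trivial, so $L_y$ is bijective and $|g_0^{-1}(y)| = 1$; over a nonzero cube $y$ we get $|g_0^{-1}(y)|\in\{0,4\}$; and $|g_0^{-1}(0)| = 2$. Hence every fibre size lies in $\{0,1,2,4\}$, which gives at once $\omega_0(3)=0$, $\omega_0(k)=0$ for $k\ge 5$, $\omega_0(2)=1$ (only $y=0$ yields a doubleton), and $\omega_0(1)$ equals the number of non-cubes in $\gf_q^*$, namely $2(q-1)/3$. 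Substituting these into $\omega_0(1) + 2\omega_0(2) + 4\omega_0(4) = q$ yields $\omega_0(4) = (q-4)/12$.

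I expect the only delicate point to be Step~1: checking carefully that the substitution $x\mapsto 1/w$ genuinely identifies $g_0^{-1}(y)$ with the solution set of an affine $\gf_2$-linear equation for $y\ne 0$ (in particular that $w=0$ and $x=0$ drop out on their own), so that the ``$0$ or $|\ker|$'' dichotomy is legitimate; after that the argument is elementary. As a cross-check for the even case one can instead count the collisions $|\{(x,y): x\ne y,\ g_0(x)=g_0(y)\}|$ via $g_0(x)+g_0(y) = (x+y)\big((x+y)^3 + (x+y)^2 + xy\big)$ together with the trace criterion applied to $T^2 + sT + s^2(s+1)$; this gives the value $q-2$, hence $\omega_0(4) = (q-4)/12$ through $2\omega_0(2) + 12\omega_0(4) = q-2$. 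The kernel computation above is, however, the shorter route.
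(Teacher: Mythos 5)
Your argument is correct and follows essentially the same route as the paper: the substitution $x\mapsto 1/w$ turning $g_0(x)=y$ into the affine equation $yw^4+w+1=0$, the observation that a nonempty solution set of an affine equation is a coset of the kernel of the associated linearized map $yw^4+w$, and the computation of that kernel via cubes versus non-cubes. The fibre counts and the final bookkeeping via $\sum_k k\,\omega_0(k)=q$ all check out.
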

\begin{proof}
Note that $x^4+x^3 =0$ has 2 solutions.
Let $y \in \gf_q^*$. Then the steps of the proof for Lemma \ref{omega-1} show that the number of solutions of 
$$
x^4+x^3 +y =0
$$
is equal to the one of the affine polynomial
$$
a_y(x):= yx^4  + x +1.
$$
If the set of zeros of $a_y(x)$ is not empty, then the number of zeros of $a_y(x)$ is equal to the one of
the linearized polynomial 
$$l_y(x)  := yx^4 +x.
$$
If $m$ is odd, then $l_y(x)$ has exactly 2 zeroes for every $y \ne 0$, implying the statement for $m$ odd.
If $m$ is even, then $l_y(x)$ has only the trivial zero if $y$ is a non-cube in $\gf_q$, and otherwise
it has 4 zeroes. To complete the proof, it remains to recall that the number of non-cubes in $\gf_q$ is
$2(q-1)/3$.
\qed \end{proof}

Lemmas \ref{omega-1}--\ref{omega0} yield the following upper bound for the size of the image sets of the considered
functions:

\begin{theorem}\label{thm-new}
Let $q = 2^m$ with $m$ odd. For $t \in \gf_q$ set $I(t) := \{ x^4+x^3
+ tx : x \in \gf_q \}$. Let $v$ be the number of pairs $x,z\in\gf_q$
with $x^2+zx=z^3+z^2+t$. Then for $t\ne0$
\[
  \lvert I(t)\rvert=\frac{5}{8}q+\frac{q+1-v}{8}+\frac{\delta}{2}<
\frac{5}{8}q+\frac{2\sqrt{q}+5}{8},
\]
where $\delta=0$ or $1$ if $Tr(t)=0$ or $1$, respectively.
\end{theorem}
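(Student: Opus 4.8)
The plan is to determine $|I(t)|$ from two linear relations among the fibre-size counts $\omega_t(k)$ and then to feed in the values of $\omega_t(1)$ and $\omega_t(3)$ supplied by Lemmas~\ref{omega-1} and~\ref{omega-3}. Since $\omega_t(k)=0$ for $k\ge5$, we have $|I(t)|=\omega_t(1)+\omega_t(2)+\omega_t(3)+\omega_t(4)$, and counting preimages gives $\omega_t(1)+2\omega_t(2)+3\omega_t(3)+4\omega_t(4)=q$. For a second relation I count ordered collision pairs: the number of $(y,z)\in\gf_q^2$ with $y\ne z$ and $g_t(y)=g_t(z)$ equals $\sum_k k(k-1)\omega_t(k)=2\omega_t(2)+6\omega_t(3)+12\omega_t(4)$. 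Writing $u=y+z\ne0$ and expanding in characteristic $2$ gives $g_t(y)+g_t(z)=u\bigl(u^3+u^2+yz+t\bigr)$, so $g_t(y)=g_t(z)$ with $y\ne z$ is equivalent (via $z=y+u$) to $y^2+uy=u^3+u^2+t$, and $(y,z)\mapsto(y,u)$ is a bijection onto the solutions of this equation with $u\ne0$. Adjoining the single solution with $u=0$ (namely $y=\sqrt t$) recovers exactly the $v$ pairs of the statement, so $v=1+2\omega_t(2)+6\omega_t(3)+12\omega_t(4)$.

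Next I would substitute the values from Lemmas~\ref{omega-1} and~\ref{omega-3}, namely $\omega_t(1)=(q+1)/3,\ \omega_t(3)=1$ if $Tr(t)=0$ and $\omega_t(1)=(q+4)/3,\ \omega_t(3)=0$ if $Tr(t)=1$, and solve the resulting $2\times 2$ system for $\omega_t(2),\omega_t(4)$. Adding up the four counts gives, uniformly,
\[
  |I(t)|=\frac{5q+3\omega_t(1)-\omega_t(3)-v+1}{8},
\]
which in the two cases becomes $\tfrac58 q+\tfrac{q+1-v}{8}+\tfrac{\delta}{2}$ with $\delta$ as defined. This yields the displayed identity; everything here is mechanical elimination once the substitution $u=y+z$ is in place.

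It remains to bound $v$ from below. The pairs counted by $v$ are precisely the affine $\gf_q$-points of the plane cubic $C\colon X^2+XZ+Z^3+Z^2+t=0$. I would pass to its projective closure $X^2W+XZW+Z^3+Z^2W+tW^3=0$ and check smoothness: inspecting the partial derivatives, a singular point would force $t=0$, which is excluded, and $[1:0:0]$ is seen to be the only point at infinity. Hence $C$ is an elliptic curve over $\gf_q$ (it carries the rational point $[1:0:0]$), so Hasse's bound gives $\#C(\gf_q)=q+1-a_t$ with $|a_t|\le2\sqrt q$, and therefore $v=\#C(\gf_q)-1=q-a_t\ge q-2\sqrt q$. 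Since $q=2^m$ with $m$ odd is not a perfect square, $2\sqrt q$ is irrational while $a_t\in\bbZ$, so in fact $a_t<2\sqrt q$ and $v>q-2\sqrt q$. Substituting this into the identity and checking the cases $\delta=0$ and $\delta=1$ separately gives $|I(t)|<\tfrac58 q+\tfrac{2\sqrt q+5}{8}$.

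The main obstacle is this last step: recognizing the collision count $v$ as a curve point count and verifying that $C$ is a smooth cubic. This is exactly where the hypothesis $t\ne0$ is used, and the non-squareness of $q$ (that is, $m$ odd) is what upgrades the Hasse inequality to the strict bound needed when $Tr(t)=1$. By contrast, the two linear relations and the elimination producing the exact formula for $|I(t)|$ are routine.
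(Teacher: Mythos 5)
Your proposal is correct and follows essentially the same route as the paper: the same three linear relations among the $\omega_t(k)$, the same substitution $z=y+u$ turning the collision count into the affine point count of the cubic $X^2+XZ=Z^3+Z^2+t$, and the same application of the Hasse bound after checking that the projective closure is smooth (for $t\ne 0$) with a single point at infinity. Your explicit observation that $2\sqrt q$ is irrational for $m$ odd, which upgrades $v\ge q-2\sqrt q$ to a strict inequality and is genuinely needed in the $Tr(t)=1$ case, is a small point the paper leaves implicit.
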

\begin{proof}
Note that
\[
\lvert I(t)\rvert = \omega_t(1) + \omega_t(2) + \omega_t(3) + \omega_t(4)
\]
and
\[
q = \omega_t(1) + 2 \cdot \omega_t(2) + 3\cdot \omega_t(3) + 4\cdot \omega_t(4).
\]
Let $v'$ be the number of distinct elements $x,y\in \gf_q$ with
$x^4+x^3+tx=y^4+y^3+ty$.
Clearly
\[
v' = 2\omega_t(2) + 6\omega_t(3) + 12\omega_t(4),
\]
hence
\[
\lvert I(t)\rvert=\frac{5q - v' + 3\omega_t(1) - \omega_t(3)}{8}.
\]
Setting $y=x+z$, we see that $x^4+x^3+tx=y^4+y^3+ty$ for $x\ne y$ is
equivalent to $x^2+zx=z^3+z^2+t$ for $z\ne0$. However, for $z=0$ this
latter equation has a unique solution, so $v=v'+1$.

Together with Lemmas \ref{omega-1}--\ref{omega0} we see that the size
of $I(t)$ is as claimed. The inequality follows from the Hasse bound for
points on elliptic curves, which in our case says that $\lvert
v-q\rvert\le 2\sqrt{q}$. (Note that the projective completion of the
curve $X^2+ZX=X^3+X^2+t$ has a unique point at infinity.)
\qed \end{proof}

The bound obtained in Theorem \ref{thm-new} can be stated also as follows
\begin{equation}\label{int-part}
 \lvert I(t)\rvert \leq \left\lfloor \frac{5}{8}q+\frac{2\sqrt{q}+5}{8} \right\rfloor,
\end{equation}
since $\lvert I(t)\rvert$ is an integer. Our numerical calculations show that for odd $1 \leq m \leq 13$ bound (\ref{int-part}) is sharp, 
that is for these $m$  there are elements $t \in \mathbb{F}_{2^m}$ for which equality holds in (\ref{int-part}).

\begin{theorem}
Let $q=2^m$ with $m$ odd and $n \geq 1$. There is a Kakeya set $K \subset \gf_q^n$ such that
$$
|K| < \frac{8q}{5q +2\sqrt{q}-3} \left( \frac{5q+ 2\sqrt{q}+5}{8} \right)^n.
$$
\end{theorem}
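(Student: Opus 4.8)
The plan is to apply the general construction recalled around~(\ref{eqn-kssize}) to the non-linear function $f(x)=x^4+x^3\in\gf_q[x]$. For $m\ge 3$ this $f$ is indeed non-linear over $\gf_q=\gf_{2^m}$ (the cubic term is not additive), so the associated set $K\subset\gf_q^n$ is a Kakeya set with
$$
|K|=\sum_{t\in\gf_q}\frac{|I(t)|^n-1}{|I(t)|-1},
$$
where $I(t)=\{x^4+x^3+tx:x\in\gf_q\}$. (For $m=1$ the claimed bound exceeds $2^n$, and $\gf_2^n$ itself is a Kakeya set, so the statement is trivial there.) It therefore suffices to bound each summand uniformly.

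Write $M:=\frac{5q+2\sqrt{q}+5}{8}$ and $\phi(x):=\frac{x^n-1}{x-1}=1+x+\cdots+x^{n-1}$; this $\phi$ is non-decreasing for $x>0$ and satisfies $\phi(x)<\frac{x^n}{x-1}$ whenever $x>1$. For $t\ne 0$, Theorem~\ref{thm-new} gives $|I(t)|<M$, so $\phi(|I(t)|)\le\phi(M)<\frac{M^n}{M-1}$, using that $M>1$. For $t=0$, Lemma~\ref{omega0} gives $|I(0)|=q/2$, and since $q/2=\frac{4q}{8}<M$ we likewise obtain $\phi(|I(0)|)<\frac{M^n}{M-1}$. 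Summing these $q$ bounds yields
$$
|K|<q\cdot\frac{M^n}{M-1}=\frac{q}{M-1}\,M^n.
$$

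To conclude, I would only have to record the identity $M-1=\frac{5q+2\sqrt{q}-3}{8}$, whence $\frac{q}{M-1}=\frac{8q}{5q+2\sqrt{q}-3}$ and the claimed inequality follows. There is essentially no obstacle here: the whole argument is the monotonicity of $\phi$ together with the already-established bounds $|I(t)|<M$ for $t\ne0$ (Theorem~\ref{thm-new}) and $|I(0)|=q/2$ (Lemma~\ref{omega0}). The only points deserving a moment's attention are checking $q/2<M$, so that the $t=0$ term is absorbed into the uniform bound $\frac{M^n}{M-1}$, and noting that the strict inequality of Theorem~\ref{thm-new} survives after applying the non-decreasing map $\phi$ — which it does, since $\phi(M)=\frac{M^n-1}{M-1}$ already lies strictly below $\frac{M^n}{M-1}$.
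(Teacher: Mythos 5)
Your argument is correct and is exactly the route the paper takes: the paper's proof of this theorem is the one-line "follows from (\ref{eqn-kssize}) and Theorem \ref{thm-new}", and you have simply written out the routine details — bounding each of the $q$ summands $\frac{|I(t)|^n-1}{|I(t)|-1}$ by $\frac{M^n}{M-1}$ with $M=\frac{5q+2\sqrt q+5}{8}$, using $|I(t)|<M$ for $t\ne 0$ and $|I(0)|=q/2<M$. Your extra care about $m=1$ and the non-linearity of $x^4+x^3$ is a harmless (and welcome) addition.
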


\begin{proof}
The statement follows from (\ref{eqn-kssize}) and Theorem \ref{thm-new}.
\qed \end{proof}

%\bibliographystyle{spmpsci}

%\bibliography{KakeyaSet}

\end{document}